\newif\ifslide
\theoremstyle{plain}
\newtheorem{theorem}{Theorem}
\newtheorem{theorem}{Theorem}[section]
\newtheorem{lemma}[theorem]{Lemma}
\newtheorem{definition-lemma}[theorem]{Definition-Lemma}
\newtheorem{question}[theorem]{Question}
\newtheorem{red-question}[theorem]{\textcolor{red}{Question}}
\theoremstyle{definition}
\newtheorem{remark}[theorem]{Remark}
\def\ideal#1.{I_{#1}}
\def\ring#1.{\mathcal {O}_{#1}}
\def\C{\mathbb C}
\def\m#1{\mathcal{#1}}
\def\fring#1.{\hat{\mathcal {O}}_{#1}}
\def\proj#1.{\mathbb {P}(#1)}
\def\pr #1.{\mathbb {P}^{#1}}
\def\dpr #1.{\hat{\mathbb {P}}^{#1}}
\def\af #1.{\mathbb A^{#1}}
\def\Hz #1.{\mathbb F_{#1}}
\def\Hbz #1.{\overline{\mathbb F}_{#1}}
\def\fb#1.{\underset #1 {\times}}
\def\rest#1.{\underset {\ \ring #1.} \to \otimes}
\def\au#1.{\operatorname {Aut}\,(#1)}
\def\deg#1.{\operatorname {deg } (#1)}
\def\pic#1.{\operatorname {Pic}\,(#1)}
\def\pico#1.{\operatorname{Pic}^0(#1)}
\def\picg#1.{\operatorname {Pic}^G(#1)}
\def\ner#1.{NS (#1)}
\def\rdown#1.{\llcorner#1\lrcorner}
\def\rfdown#1.{\lfloor{#1}\rfloor}
\def\rup#1.{\ulcorner{#1}\urcorner}
\def\rcup#1.{\lceil{#1}\rceil}
\def\n1#1.{\operatorname {N_1}(#1)}  
\def\cn1#1.{\overline{\operatorname {N^1}(#1)}} 
\def\cone#1.{\operatorname {NE}(#1)}     
\def\ccone#1.{\overline{\operatorname {NE}}(#1)}
\def\none#1.{\operatorname {NF}(#1)}
\def\cnone#1.{\overline{\operatorname {NF}}(#1)}
\def\mone#1.{\operatorname {NM}(#1)} 
\def\cmone#1.{\overline{\operatorname {NM}}(#1)}
\def\coef#1.{\frac{(#1-1)}{#1}}
\def\vit#1.{D_{\langle #1 \rangle}}
\def\mm#1.{\overline {M}_{0,#1}}
\def\H1#1.{H^1(#1,{\ring #1.})}
\def\ac#1.{\overline {\mathbb F}_{#1}}
\def\adj#1.{\frac {#1-1}{#1}}
\def\spn#1.{\overline{#1}}
\def\pek#1.#2.{\Cal P^{#1}(#2)}
\def\plk#1.#2.{\Cal P^{\leq #1}(#2)}
\def\ev#1.{\operatorname{ev_{#1}}}
\def\ilist#1.{{#1}_1,{#1}_2,\dots}
\def\bminv#1.{(\nu_1,s_1;\nu_2,s_2;\dots ;\nu_{#1},s_{#1};\nu_{r+1})}
\def\zinv#1.{(\nu_1,s_1;\nu_2,s_2;\dots ;\nu_{#1},s_{#1};0)}
\def\iinv#1.{(\nu_1,s_1;\nu_2,s_2;\dots ;\nu_{#1},s_{#1};\infty)}
\def\scr #1.{\mathcal #1}
\def\llist#1.#2.{{#1}_1,{#1}_2,\dots,{#1}_{#2}}
\def\ulist#1.#2.{{#1}^1,{#1}^2,\dots,{#1}^{#2}}
\def\lomitlist#1.#2.{{#1}_1,{#1}_2,\dots,\hat {{#1}_i}, \dots, {#1}_{#2}}
\def\lomitlistz#1.#2.{{#1}_0,{#1}_1,\dots,\hat {{#1}_i}, \dots, {#1}_{#2}}
\def\loc#1.#2.{\Cal O_{#1,#2}}
\def\fderiv#1.#2.{\frac {\partial #1}{\partial #2}}
\def\deriv#1.#2.{\frac {d #1}{d #2}}
\def\map#1.#2.{#1 \longrightarrow #2}
\def\rmap#1.#2.{#1 \dasharrow #2}
\def\emb#1.#2.{#1 \hookrightarrow #2}
\def\non#1.#2.{\text {Spec }#1[\epsilon]/(\epsilon)^{#2}}
\def\Hi#1.#2.{\text {Hilb}^{#1}(#2)}
\def\sym#1.#2.{\operatorname {Sym}^{#1}(#2)}
\def\Hb#1.#2.{\text {Hilb}_{#1}(#2)}
\def\Hm#1.#2.{\Hom_{#1}(#2)}
\def\prd#1.#2.{{#1}_1\cdot {#1}_2\cdots {#1}_{#2}}
\def\Bl #1.#2.{\operatorname {Bl}_{#1}#2}
\def\pl #1.#2.{#1^{\otimes #2}}
\def\mgn#1.#2.{\overline {M}_{#1,#2}}
\def\ialist#1.#2.{{#1}_1 #2 {#1}_2, #2\dots}
\def\pair#1.#2.{\langle #1, #2\rangle}
\def\vandermonde#1.#2.{\left|
\begin{matrix}
1 & 1 & 1 & \dots & 1\\
{#1}_1 & {#1}_2 & {#1}_3 & \dots & {#1}_{#2}\\
{#1}_1^2 & {#1}_2^2 & {#1}_3^2 & \dots & {#1}_{#2}^2\\
\vdots & \vdots & \vdots & \ddots & \vdots\\
{#1}_1^{#2-1} & {#1}_2^{#2-1} & {#1}_2^{#2-1} & \dots & {#1}_{#2}^{#2-1}\\
\end{matrix}
\right|
}
\def\vandermondet#1.#2.{\left|
\begin{matrix}
1 & {#1}_1   & {#1}_1^2 & \dots & {#1}_1^{#2-1}\\
1 & {#1}_2   & {#1}_2^2 & \dots & {#1}_2^{#2-1}\\
1 & {#1}_3   & {#1}_3^2 & \dots & {#1}_3^{#2-1}\\
\vdots & \vdots & \vdots & \ddots & \vdots\\
1 & {#1}_{#2}& {#1}_{#2}^2 & \dots & {#1}_{#2}^{#2-1}\\
\end{matrix}
\right|
}
\def\gr#1.#2.{\mathbb{G}(#1,#2)}
\def\alist#1.#2.#3.{{#1}_1 #2 {#1}_2 #2\dots #2 {#1}_{#3}}
\def\zlist#1.#2.#3.{#1_0 #2 #1_1 #2\dots #2 #1_{#3}}
\def\lomitlist30#1.#2.#3.{{#1}_0,{#1}_1 #2 \dots #2\hat {{#1}_i} #2\dots #2 {#1}_{#3}}
\def\lmap#1.#2.#3.{#1 \overset{#2}{\longrightarrow} #3}
\def\mes#1.#2.#3.{#1 \longrightarrow #2 \longrightarrow #3}
\def\ses#1.#2.#3.{0\longrightarrow #1 \longrightarrow #2 \longrightarrow #3 \longrightarrow 0}
\def\les#1.#2.#3.{0\longrightarrow #1 \longrightarrow #2 \longrightarrow #3}
\def\res#1.#2.#3.{#1 \longrightarrow #2 \longrightarrow #3\longrightarrow 0}
\def\Hi#1.#2.#3.{\text {Hilb}^{#1}_{#2}(#3)}
\def\ten#1.#2.#3.{#1\underset {#2}{\otimes} #3}
\def\lomitlist30#1.#2.#3.{{#1}_0 #2 {#1}_1 #2 \dots #2 \hat {{#1}_i} #2 \dots #2 {#1}_{#3}}
\def\mderiv#1.#2.#3.{\frac {d^{#3} #1}{d #2^{#3}}}
\def\Hom{\operatorname{Hom}}
\def\deg{\operatorname{deg}}
\def\GL{\operatorname{GL}}
\def\rk{\operatorname{rk}}
\def\rest{\operatorname{res}}
\def\C{\mathbb C}
\def\e{\Cal E}
\def\e1{E_1}
\def\e2{E_2}
\def\mapdown#1{\big\downarrow\rlap{$\vcenter{\hbox{$\scriptstyle#1$}}$}}
\def\mapse#1{
{\vcenter{\hbox{$\mathop{\smash{\raise1pt\hbox{$\diagdown$}\!\lower7pt
\hbox{$\searrow$}}\vphantom{p}}\limits_{#1}\vphantom{\mapdown{}}$}}}}
\def\VR#1.{height#1pt&\omit&&\omit&&\omit&&\omit&&\omit&\cr}
\def\VRT#1.{height#1pt&\omit&&\omit&\cr}
\title{A remark on the Ueno-Campana's threefold}
\author{Cinzia Bisi}
\address{Department of Mathematics and Computer Science\\ 
 University of Ferrara\\ Via Machiavelli, 35 \\ Ferrara 44121, Italy}\email{bsicnz@unife.it}
\author{Paolo Cascini}
\address{Department of Mathematics\\
Imperial College London\\
180 Queen's Gate\\
London SW7 2AZ, UK}
\email{p.cascini@imperial.ac.uk}
\author{Luca Tasin}
\address{Mathematical Institute of the University of Bonn \\ Endenicher Allee 60 D-53115 \\
Bonn, Germany.} 
\email{tasin@math.uni-bonn.de}
\thanks{Part of this work was completed while the second and third authors were attending the workshop "Algebraic Geometry" at Oberwolfach, on 16-20 March 2015. We would like to thank the organisers and the Institute for the invitation and for providing an ideal environment to work. We would also like to thank F. Catanese, J. Lesieutre, K. Oguiso and D.-Q. Zhang for many useful discussions. We are grateful to  the referee for carefully reading our manuscript and for suggesting several improvements.}
\thanks{The first author was partially supported by Prin 2010-2011 Protocollo: 2010NNBZ78-012, by Firb 2012 Codice: RBFR12W1AQ-001 and by GNSAGA-INdAM. The second author was funded by EPSRC. The third author was partially funded by the Italian grant GNSAGA-INdAM}
\dedicatory{Dedicated to Fabrizio Catanese on his 65th birthday}
\begin{document}

\begin{abstract}We show that the Ueno-Campana's threefold cannot be obtained as the blow-up of any smooth threefold along a smooth centre, answering negatively a question raised by 
Oguiso and  Truong.
\end{abstract}
\maketitle

\section{Introduction}

Let $E_{\tau}=\mathbb{C} / (\mathbb{Z} + \mathbb{Z} \tau)$ be the complex elliptic curve of period $\tau.$ 
There exist exactly two elliptic curves with automorphism group bigger than $\{\pm 1\}$: these are defined respectively by 
  the periods ${\sqrt{-1}}$ and the cubic root of  unity $\omega:= (-1+\sqrt{-3})/2$. 

We consider the diagonal action of the cyclic group generated by $\sqrt {-1}$ (resp. $-\omega$) on the product $$E_{\sqrt{-1}} \times E_{\sqrt{-1}} \times E_{\sqrt{-1}}\qquad  \text{(resp. } E_{\omega} \times E_{\omega} \times E_{\omega})$$ and we denote by $X_4$ (resp. $X_6$) the minimal resolution of their quotients: 
$$\begin{aligned}
E_{\sqrt{-1}} \times E_{\sqrt{-1}} \times E_{\sqrt{-1}} / \langle \sqrt{-1}  \rangle\qquad  
\text{(resp. } 
E_{\omega} \times E_{\omega} \times E_{\omega} / \langle -\omega   \rangle).
\end{aligned}$$
The minimal resolutions are obtained by a single blow-up at the maximal ideal of each singular point of the quotients above. 

The threefolds $X_4$ and $X_6$ have been extensively studied in the past. In particular, they admit an automorphism of positive entropy (e.g. see \cite{og14} for more details).
The variety $X_4$ is now referred as the {\em Ueno-Campana's threefold}. It has  been recently shown that $X_4$ and $X_6$ are rational. Indeed, Oguiso and Truong  \cite{ot15} showed the rationality of  $X_6$, and  Colliot-Th\'el\'ene \cite{Colliot15} showed the rationality of  $X_4$, after the work of Catanese, Oguiso and Truong \cite{cot14}. The unirationality of $X_4$ was conjectured by Ueno \cite{Ueno75}, whilst 
 Campana asked about the rationality of $X_4$ in  \cite{Ca11}.

\medskip

The aim of this note is to give a negative answer to the following question raised by  Oguiso and Truong (see \cite{og14}[Question 5.11] and \cite{ttt15}[Question 2]).

\begin{question}\label{q_oguiso}
Can $X_4$ or $X_6$ be obtained as the blow-up of $\mathbb P^3$, $\mathbb P^2\times \mathbb P^1$ or $\mathbb P^1\times \mathbb P^1\times \mathbb P^1$ along smooth centres?
\end{question}

Our main result is the following:

\begin{theorem}\label{main}
Let $A$ be an abelian variety of dimension three and let $G$ be a finite group acting on $A$ such that the quotient map 
$$\rho\colon A \to Z=A/G$$ is \'etale in codimension 2. 

Assume that there exists a  resolution $f\colon X \to Z$  given by the blow-up of the singular points of $Z$ and such that the exceptional divisor at each singular point of $Z$ is irreducible. 

Then $X$ cannot be obtained as the blow-up of a smooth threefold along a smooth centre.  
\end{theorem}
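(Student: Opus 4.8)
The plan is to argue by contradiction, playing the contraction $f$ (which remembers the abelian/quotient origin of $X$) against a hypothetical smooth blow-down $g$. First I would record the two global facts that $f$ carries. Since $\rho$ is étale in codimension one there is no ramification divisor, so $K_A=\rho^*K_Z$ and hence $K_Z\equiv 0$; and since $Z=A/G$ has only quotient singularities it is klt, so every discrepancy exceeds $-1$. Writing $E_1,\dots,E_k$ for the (pairwise disjoint, irreducible) exceptional divisors of $f$ over the finitely many singular points $p_1,\dots,p_k$ — each contracted to a point — this gives
\[
K_X\;\equiv\;\sum_i a_iE_i,\qquad a_i>-1 .
\]
Now suppose $g\colon X\to Y$ is the blow-up of a smooth threefold $Y$ along a smooth centre $C$, with exceptional divisor $F$ and $c=\operatorname{codim}_YC\in\{2,3\}$. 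Let $\ell\subset F$ be the class of a line (if $C$ is a point) or of a ruling fibre (if $C$ is a curve); then $F\cdot\ell=-1$ and $K_X\cdot\ell=-(c-1)$. Intersecting the relation above with $\ell$ gives the basic identity
\[
-(c-1)=\sum_i a_i\,(E_i\cdot\ell),
\]
and the whole argument reduces to bounding the nonnegative integers $e_i:=E_i\cdot\ell$.

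The first step I would carry out is that $F$ is none of the $E_i$. If $C$ is a point then $F\cong\mathbb{P}^2$ with $N_{F/X}=\mathcal{O}(-1)$, so contracting $F$ recovers a smooth point, whereas $p_i\in\operatorname{Sing}Z$; hence $F\neq E_i$. If $C$ is a curve and $F=E_i$, then $f$ contracts $F$ to the single point $p_i$, but a ruling fibre $\ell$ and a section $\sigma$ of $F\to C$ are both $f$-contracted while $g_*\ell=0\neq g_*\sigma$, so they are independent in $N_1(X)$; this forces the relative Picard number over $p_i$ to be at least two for a single exceptional divisor, contradicting $\mathbb{Q}$-factoriality of $Z$. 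Consequently $F\not\subseteq\bigcup E_i$, so $f|_F\colon F\to f(F)$ is birational and contracts exactly the curves $\Gamma_i:=F\cap E_i$ to the points $p_i$.

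The heart of the matter is the next step. By negativity of contracted curves (Grauert–Mumford, applied after Stein factorisation of $f|_F$), $\bigcup\Gamma_i$ is negative definite on the smooth surface $F$, so every component of every $\Gamma_i$ has negative self-intersection. Here the geometry of $F$ bites: if $C$ is a point then $F=\mathbb{P}^2$ carries no curve of negative self-intersection, so all $\Gamma_i$ are empty and every $e_i=0$; if $C$ is a curve then $F$ is geometrically ruled, whose only irreducible negative curve is the unique negative section (a $1$-section), whence $\sum_i e_i=\bigl(\bigcup\Gamma_i\bigr)\cdot\ell\le 1$. Fibre components are automatically excluded since fibres have self-intersection $0$.

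Finally I would combine these with kltness. If $C$ is a point then $\sum_i a_ie_i=0\neq-2$; if $C$ is a curve then some $e_i>0$ while $\sum_i e_i\le 1$, so using $a_i>-1$,
\[
-(c-1)=\sum_i a_ie_i>-\sum_i e_i\ge -1,
\]
forcing $c<2$, contrary to $c\ge 2$. Either way one reaches a contradiction, which proves the theorem. The conceptual point, and the step I expect to be the main obstacle to make fully rigorous, is the bound $\sum_i E_i\cdot\ell\le 1$: it expresses that a smooth blow-up exceptional divisor can meet the resolution divisors only very mildly, and it is precisely this that clashes with $a_i>-1$ in the discrepancy identity. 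Its proof needs both ingredients above — excluding $F=E_i$ by smoothness and $\mathbb{Q}$-factoriality, and the classification of negative curves on $\mathbb{P}^2$ and on ruled surfaces — and the ruled-surface bookkeeping is the delicate part.
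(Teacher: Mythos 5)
Your strategy is genuinely different from the paper's: the paper never looks at discrepancies at all, but instead encodes $X$ in its cubic intersection form $F_X$ on $H^2(X,\mathbb{C})$ and compares Hessian ranks (the class of a smooth blow-up exceptional divisor forces rank $\le 2$, while for $Z=A/G$ the rank can drop to $\le 1$ only at the origin), then finishes with the negativity lemma and $\mathbb{Q}$-factoriality of $Z$. Your plan --- playing $K_X\equiv\sum_i a_iE_i$ with $a_i>-1$ against $K_X\cdot\ell=-(c-1)$ on the hypothetical smooth blow-down, and bounding $E_i\cdot\ell$ by classifying negative curves on $F$ --- is an attractive alternative, and several of its steps are sound: the reduction $K_Z\equiv 0$ and kltness, the exclusion $F\neq E_i$ (via smoothness of the contraction of $(\mathbb{P}^2,\mathcal{O}(-1))$, resp.\ via $\mathbb{Q}$-factoriality of $Z$ and the fact that $f$-contracted curves inside a single $E_i$ span a line in $N_1(X)$), the Mumford negativity applied to $f|_F$, and the entire point case ($F\cong\mathbb{P}^2$ carries no negative curves, so all $e_i=0$ and $-2=0$ is absurd).

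The gap is in the curve case, at exactly the bound you flag as crucial, $\sum_i e_i\le 1$. Your negative-curve classification controls only the \emph{reduced} intersection: at most one $E_{i_0}$ meets $F$, and $(F\cap E_{i_0})_{\mathrm{red}}=C_0$ is the unique negative section, so $(F\cap E_{i_0})_{\mathrm{red}}\cdot\ell=1$. But $e_{i_0}=E_{i_0}\cdot\ell$ is computed from the Cartier restriction $E_{i_0}|_F$, which equals $mC_0$ where $m\ge 1$ is the intersection multiplicity of $F$ and $E_{i_0}$ along $C_0$; equivalently, $m=\operatorname{mult}_C\bigl(g(E_{i_0})\bigr)$ is the multiplicity of the surface $g(E_{i_0})\subset Y$ along the blown-up curve $C$. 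Nothing in your argument forces $m=1$, i.e.\ forces $g(E_{i_0})$ to be generically smooth along $C$; if $g(E_{i_0})$ has, say, a cuspidal-type singularity along $C$ whose tangent cone is a single plane with multiplicity $m\ge 2$, then the strict transform $E_{i_0}$ still meets $F$ in the single section $C_0$, yet $e_{i_0}=m$. Your final inequality then collapses: the identity reads $-1=a_{i_0}m$, i.e.\ $a_{i_0}=-1/m$, which is perfectly compatible with klt ($a_{i_0}>-1$). This is not a hypothetical worry: for the Ueno--Campana threefold $X_4$ the singular points of type $\tfrac{1}{4}(1,1,1)$ have irreducible exceptional divisor with discrepancy exactly $-\tfrac14$, so a multiplicity-$4$ tangency would satisfy every constraint you impose. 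To close the argument you need an additional step ruling out such tangencies (extra intersection-theoretic relations on $F$ and $E_{i_0}$, or a different choice of test curves), and that is precisely the bookkeeping your write-up defers rather than carries out; the paper's Hessian-rank argument avoids the issue altogether, since it only ever manipulates numerical classes, never scheme-theoretic intersections.
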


Note that Theorem \ref{main} provides a negative answer to Question \ref{q_oguiso}. Very recently, Lesieutre \cite{lesieutre15} announced that Question \ref{q_oguiso} admits a negative answer, using different methods.

\section{Preliminary results}

We use some of the methods introduced in \cite{ct14}. Let $X$ be a normal projective threefold with isolated quotient singularities. Given a basis $\gamma_1,\dots,\gamma_m$ of $H^2(X,\C)$, the {\em cubic form associated to} $X$ is the homogeneous polynomial of degree $3$ defined by: 
$$F_X(x_1,\dots,x_m)=(x_1\gamma_1+\dots+x_m\gamma_m)^3\in \C[x_1,\dots,x_m].$$
Note that, modulo the natural action of $\GL(m,\C)$, the cubic $F_X$ does not depend on the choice of the base and it is a topological invariant of the underlying manifold $X$ (see \cite{ov95} for more details). 
In particular, if 
$$\mathcal H_{F_X}=(\partial_{x_i}\partial_{x_j}F_X)_{i,j=1,\dots,m}$$ 
denotes the {\em Hessian matrix} associated to $F_X$ and $p\in H^2(X,\C)$, then the rank of ${\mathcal H}_{F_X}$ at $p$ is well-defined. 

The following basic tool was used in \cite{ct14} in a more general context. We provide a proof for the reader's convenience. 

\begin{lemma}\label{rk2} Let $Y$ be a normal projective threefold with isolated quotient singularities and let $f\colon X\to Y$ be the blow-up of $Y$ along  a point $q\in Y$ (resp. a curve $C\subseteq Y$). Assume that   the exceptional divisor of $f$  is irreducible and let $E$ be its class in $H^2(X,\mathbb C)$. 

Then the rank of the Hessian matrix $\mathcal H_{F_X}$ of $F_X$ at $E$ is one (resp. at most two). 
\end{lemma}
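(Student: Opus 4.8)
The plan is to identify the Hessian of $F_X$ at $E$ with an explicit symmetric bilinear form on $H^2(X,\C)$ and then bound its rank geometrically. Writing $p=\sum_i x_i\gamma_i$, so that $F_X=\int_X p^3$, a direct computation of second partial derivatives gives $\partial_{x_i}\partial_{x_j}F_X=6\int_X \gamma_i\cup\gamma_j\cup p$. Evaluating at $p=E$ shows that, up to the harmless factor $6$, the matrix $\mathcal H_{F_X}(E)$ is the Gram matrix of the symmetric bilinear form $Q_E(\alpha,\beta)=\int_X\alpha\cup\beta\cup E$ on $H^2(X,\C)$. Hence it suffices to bound the rank of $Q_E$.

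Next I would rewrite $Q_E$ as a form supported on the exceptional divisor. Denoting by $i\colon E\hookrightarrow X$ the inclusion, the projection formula gives $Q_E(\alpha,\beta)=\int_E (i^*\alpha)\cup(i^*\beta)$, so that $Q_E$ is the pullback along the restriction map $r\colon H^2(X,\C)\to H^2(E,\C)$ of the cup-product pairing on $H^2(E,\C)$. Consequently $\rk Q_E\le \dim_\C \operatorname{im} r$, and the whole problem reduces to controlling how many classes survive restriction to $E$. Here I would invoke the standard description $H^2(X,\C)=f^*H^2(Y,\C)\oplus \C\,E$ of the second cohomology of a blow-up.

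For the point case, $f$ contracts $E$ to the point $q$, so for any pulled-back class we have $i^*f^*\bar\alpha=(f\circ i)^*\bar\alpha$, which is the pullback of $\bar\alpha|_{\{q\}}\in H^2(\{q\})=0$; thus $r$ kills $f^*H^2(Y,\C)$, and $\operatorname{im} r$ is spanned by $i^*E$, hence at most one-dimensional, giving $\rk Q_E\le 1$. Since $-E|_E$ is $f$-ample, $E^3=\int_E(i^*E)^2>0$, so $Q_E(E,E)\neq 0$ and the rank is exactly one. For the curve case, $\phi:=f|_E$ maps $E$ onto the irreducible curve $C$, so $i^*f^*\bar\alpha=\phi^*(\bar\alpha|_C)$; as $H^2(C,\C)\cong\C$, these classes span at most a one-dimensional subspace of $H^2(E,\C)$, which together with $\C\,(i^*E)$ bounds $\operatorname{im} r$ by two. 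Hence $\rk Q_E\le 2$.

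The main obstacle is technical rather than conceptual: $Y$, and possibly $X$ and $E$, carries quotient singularities, so one must check that integration over $E$, the projection formula, and the decomposition $H^2(X,\C)=f^*H^2(Y,\C)\oplus\C E$ all remain valid. This is guaranteed by the fact that varieties with quotient singularities are $\Q$-homology manifolds, so that Poincar\'e duality and the functorial properties of cup and cap products hold with $\C$-coefficients; the decomposition of $H^2$ then follows from the isomorphism $X\setminus E\cong Y\setminus f(E)$ together with the associated Gysin and Mayer--Vietoris sequences. Once these foundational points are secured, the two rank bounds above are immediate.
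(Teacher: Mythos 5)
Your proof is correct, and it takes a genuinely different route from the paper's. The paper argues in coordinates: in the adapted basis $E, f^*\gamma_1, \dots, f^*\gamma_m$, the vanishings $f^*\gamma_i \cdot f^*\gamma_j \cdot E = 0$ (plus $E^2 \cdot f^*\gamma_i = 0$ in the point case) show that $F_X = a x_0^3 + 3\sum_i b_i x_0^2 x_i + G(x_1, \dots, x_m)$ with $G$ independent of $x_0$, and since the Hessian of the cubic $G$ vanishes at the origin, $\mathcal H_{F_X}(E)$ is nonzero only in its first row and column, giving rank at most two, and exactly one when all $b_i=0$. You instead identify $\mathcal H_{F_X}(E)$ invariantly as the Gram matrix of $Q_E(\alpha, \beta) = \alpha \cdot \beta \cdot E$, factor $Q_E$ through the restriction map $r \colon H^2(X, \C) \to H^2(E, \C)$, and bound the rank by $\dim \operatorname{im} r$ using $H^2(\{q\}) = 0$, respectively $H^2(C, \C) \cong \C$. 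The linear-algebra mechanisms really differ: in the curve case the paper's bound comes from $Q_E$ vanishing identically on the codimension-one subspace $f^* H^2(Y, \C)$ (an isotropy argument), while yours comes from $Q_E$ factoring through a space of dimension at most two. Your route is basis-free, makes the geometric reason for the bounds transparent, and yields the ``exactly one'' statement in the point case directly from $E^3 = (E|_E)^2 > 0$ via relative ampleness of $-E$, where the paper instead appeals to \cite{ct14} to rule out rank zero; the paper's route, on the other hand, needs no information about the cohomology of the possibly singular surface $E$ or of the curve $C$, only two intersection-number vanishings (both instances of the projection formula) and the shape of the resulting matrix. Two small points to keep in mind: both arguments rest equally on the decomposition $H^2(X, \C) = f^* H^2(Y, \C) \oplus \C\,[E]$, which the paper asserts outright and you sketch via Gysin/Mayer--Vietoris for $\mathbb Q$-homology manifolds, so neither is more complete there; and in your ampleness step one should allow that only a positive multiple of $-E$ is $f$-ample (the Cartier exceptional divisor of the blow-up may be a multiple of the reduced irreducible divisor), which changes nothing about the sign of $E^3$.
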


Note that by \cite{ct14}[Lemma 2.7 and Lemma 2.12] the rank of $\mathcal H_{F_X}$ is never zero. 
\begin{proof}
We have $H^2(X, \C) =\langle E, f^*(\gamma_1), \ldots , f^*(\gamma_m) \rangle$ where $\gamma_1, \ldots, \gamma_m$ is a basis  of $ H^2 (Y, \mathbb{C})$.

Consider the cubic form $F_X$ associated to $X$ with respect to this basis:
$$
F_X (x_0, \ldots ,x_m)=(x_0 E + \sum\limits_{i=1}^m x_i f^* (\gamma_i))^3.  
$$
Since $f^*(\gamma_i) \cdot f^*(\gamma_j)\cdot E= 0$  for all $i,j=1, \ldots m,$  we have
$$\begin{aligned}
F_X (x_0, \ldots , x_m)
=& x_0^3 E^3 + 3\sum\limits_{i=1}^m x_0^2 x_i  E^2 f ^* (\gamma_i)  + (\sum\limits_{i=1}^m x_i f^* (\gamma_i))^3.  
\end{aligned}
$$
Let $a=E^3$ and let $b_i=E^2 f ^* (\gamma_i)$ for $i=1,\dots,m$. Note that  if $f$ is the blow-up of a point $q\in Y$ then $b_1=\ldots=b_m=0$. 

Thus, we have
$$
F_X(x_0, \ldots, x_m)=ax_0^3 + 3\sum\limits_{i=1}^m b_i x_0^2 x_i + G(x_1, \ldots , x_m),
$$
where   $G$ is a homogeneous cubic polynomial in the  variables $x_1, \ldots , x_m,$ i.e. it does not depend on $x_0.$ 
Let $p=y_0E+\sum_{i=1}^m y_i f^*\gamma_i\in H^2(X,\mathbb C)$, for some $y_0,\dots,y_m\in \mathbb C$ and let $p'=(y_1,\dots,y_m)$. After removing the first row and the first column, the Hessian matrix  $\mathcal{H}_{F_X}(p)$ of $F_X$ at $p$, coincides with the Hessian matrix  $\mathcal{H}_G(p')$ of $G$ at $p'$.  

In particular, if $p=E$, then $p'=(0,\dots,0)$ and $\mathcal H_G(p')$ is the zero matrix. Thus,  the rank of  the Hessian of $F_X$ at $p$ is at most two. In addition, if $b_1=\ldots=b_m=0$, then the rank of $\mathcal H_F$ at $p$ is exactly one. 
 \end{proof}

\section{Proofs}

\begin{lemma}\label{abelian}
Let $A$ be an abelian variety of dimension $3$ and let $G$ be a finite group acting on $A$ such that the quotient map $\rho\colon A \to Z=A/G$ is \'etale in codimension 2. Let $F_Z$ be the cubic form associated to $Z$ and let $p \in H^2(Z,\C)$ such that $\rk \m H_{F_Z}(p) \le 1$. 

Then $p=0$.
\end{lemma}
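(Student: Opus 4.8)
The plan is to transfer the statement from the singular quotient $Z$ to the abelian variety $A$ and to compute there. Since $\rho$ is a finite quotient of degree $|G|$ that is \'etale in codimension $2$, the pullback $\rho^*\colon H^2(Z,\C)\to H^2(A,\C)^G$ is an isomorphism (transfer, with $\C$-coefficients), and by the projection formula $\rho^*\alpha\cdot\rho^*\beta\cdot\rho^*\gamma=|G|\,(\alpha\cdot\beta\cdot\gamma)$, so that $F_A\circ\rho^*=|G|\,F_Z$. Consequently $\rk\mathcal H_{F_Z}(p)$ equals the rank of $\mathcal H_{F_A}(\rho^*p)$ restricted to the invariant subspace $W:=H^2(A,\C)^G$, and it suffices to show that this restricted rank is at least $2$ whenever $q:=\rho^*p\ne 0$.

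On the torus the cup product identifies $H^2(A,\C)$ with $\bigwedge^2 V$, where $V=H^1(A,\C)$ has dimension $6$, and $F_A(q)=q\wedge q\wedge q\in\bigwedge^6 V\cong\C$. Polarising, the Hessian of $F_A$ at $q$ is the symmetric form $(\beta,\gamma)\mapsto 6\,q\wedge\beta\wedge\gamma$; as the wedge pairing $\bigwedge^4 V\times\bigwedge^2 V\to\bigwedge^6 V$ is perfect, its kernel on all of $\bigwedge^2 V$ is $\{\beta:q\wedge\beta=0\}$. Viewing a nonzero $q$ as an alternating $2$-form of even rank $2$, $4$ or $6$, a direct computation gives $\dim\{\beta:q\wedge\beta=0\}=9,5,0$ respectively, so $\rk\mathcal H_{F_A}(q)=6,10,15$. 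In particular, on the full space $\bigwedge^2 V$ the Hessian has rank at least $6$ at every nonzero class, which already proves the analogue of the lemma for $A$ itself.

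The main obstacle is that restricting the Hessian form to the subspace $W$ can drop its rank, so the bound on $\bigwedge^2 V$ does not transfer automatically. To control this I would fix a $G$-invariant K\"ahler class $h\in W$; as an alternating form $h$ is nondegenerate, so $\beta\mapsto h\wedge\beta$ is an isomorphism $\bigwedge^2 V\to\bigwedge^4 V$ by the hard Lefschetz theorem, and since hard Lefschetz and Poincar\'e duality hold on the $\mathbb Q$-homology manifold $Z$, the form $B_h(\beta,\gamma)=h\wedge\beta\wedge\gamma$ is nondegenerate on $W$. Writing $B_q(\beta,\gamma)=q\wedge\beta\wedge\gamma$ for the restricted Hessian and comparing it with $B_h$, the hypothesis $\rk B_q|_W\le 1$ forces $B_q|_W=c\,B_h(u,\cdot)\,B_h(u,\cdot)$ for some $u\in W$ and $c\in\C$. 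I would then contradict this for $q\ne0$ by testing against classes of pure Hodge type in the $G$-invariant decomposition $W=W^{2,0}\oplus W^{1,1}\oplus W^{0,2}$: bidegree reasons force $B_q$ to vanish on several type-components, which is incompatible with a nonzero symmetric rank-one form unless $q^{2,0}$, $q^{1,1}$ and $q^{0,2}$ all vanish. Reconciling the skew-rank classification on $\bigwedge^2 V$ with the degeneracy imposed on the subspace $W$ is the step I expect to require the most care.
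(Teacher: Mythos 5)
Your transfer to $A$, the rank computation for the full Hessian on $H^2(A,\mathbb{C})$ (ranks $6$, $10$, $15$ according to the skew-rank of $q$), the nondegeneracy of $B_h$ on $W$, and the reduction of a rank-$\le 1$ form to one supported on $W^{1,1}\times W^{1,1}$ --- which kills $q^{2,0}$ and $q^{0,2}$ via Hodge--Riemann --- are all sound. You have also correctly isolated the real difficulty, which the paper's own proof passes over: the paper transfers to $A$ and then checks, by explicit $2\times 2$ minors, that $\mathcal H_{F_A}$ has rank $\ge 2$ at every nonzero class of $H^2(A,\mathbb{C})$, but the minors it uses have rows in the $(2,0)$-directions $z_{12},z_{13}$, which need not lie in $W$ at all (for $X_4$ they do not, since there $W=W^{1,1}$); so bounding the rank of the full Hessian does not by itself bound the rank of the restriction $\mathcal H_{F_Z}=\mathcal H_{F_A}|_{W}$ from below, exactly as you observe.

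However, your final step is not merely delicate --- as stated it is false, and the gap is fatal. Bidegree considerations cannot force $q^{1,1}=0$: a rank-one symmetric form concentrated on the block $W^{1,1}\times W^{1,1}$ is compatible with every Hodge-type constraint you list. The tell-tale sign is that your argument (like the paper's computation) never uses the hypothesis that $\rho$ is \'etale in codimension $2$, and the lemma is false without it: take $A=E^3$ and $G=(\mathbb{Z}/2)^3\rtimes S_3$ acting by sign changes and permutations, so that $Z=A/G\cong \operatorname{Sym}^3\mathbb{P}^1=\mathbb{P}^3$; then $W=\mathbb{C}h$, $F_Z=x^3$ up to scale, and $\mathcal H_{F_Z}$ has rank one at $h\neq 0$, while $q=q^{1,1}=h$, $W^{2,0}=W^{0,2}=0$ and $B_q|_W=c\,B_h(h,\cdot)\,B_h(h,\cdot)$, so all of your derived conditions hold with $q\neq 0$. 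Hence the codimension hypothesis must be fed into the analysis of the $(1,1)$-block, and that block is not a corner case but the whole point: in the motivating example $X_4$ one has $W=W^{1,1}\cong M_3(\mathbb{C})$ and $F_Z=\det$ up to scale, so the entire lemma there is the assertion that the Hessian of $\det$ has rank $\ge 2$ at every nonzero $3\times 3$ matrix (true, with minimal rank $4$ at rank-one matrices, but requiring its own argument, which neither you nor the paper's minors supply, since the paper's rows $z_{12},z_{13}$ are unavailable). What is missing is an argument that uses \'etale-in-codimension-$2$ --- e.g.\ the constraint $\det(M_g-\operatorname{id})\neq 0$ on the linear part $M_g$ of every non-free $g\neq e$ --- to rule out degenerate invariant subspaces (such as $\dim W^{1,1}=1$ with $W^{2,0}=W^{0,2}=0$) and, more generally, to show that rank $\le 1$ of $q^{1,1}\wedge\cdot\wedge\cdot$ on $W^{1,1}$ forces $q^{1,1}=0$.
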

\begin{proof}
The morphism $\rho$ induces an immersion of vector spaces 
$$\rho^* \colon H^2(Z, \C) \to H^2(A,\C).$$  Thus, there exists a  basis of $H^2(A,\C)$ such that if $F_A$ is the cubic associated to $A$ with respect to this basis and $d$ is the degree of $\rho$, then 
$$
F_Z(x_1,\dots,x_m)=d \cdot F_A(x_1,\dots,x_m,0,\dots,0).
$$

It is enough to show that if $q\in H^2(A,\C)$ is such that the rank of $\mathcal H_{F_A}$ at $q$ is not greater than one, then $q=0$. 

Write $A= \C^3 / \Gamma$ and consider $z_1, z_2, z_3$ coordinates on $\C^3$. Then a basis of $H^2(A,\C)$ is given by 

\begin{align*}
z_{ij} &= dz_i \wedge dz_j \quad 1 \le i <j \le 3, \\
z_{i \bar j} &= dz_i \wedge d \bar{z_j} \quad  i \,,  j \in \{ 1, 2, 3 \}, \\
z_{\bar i \bar j} &= d\bar z_i \wedge d\bar z_j \quad 1 \le i <j \le 3.
\end{align*}

For any $x\in H^2(A,\C)$, let $x_{ij}, x_{i\bar j}$ and $x_{\bar i \bar j}$ be the coordinates of $x$ with respect to the basis above and let $F'_A$ be the cubic associated to this basis. It is enough to show that if $q\in H^2(A,\C)$ is such that the rank of $\m H_{F'_A}$ at $q$ is not greater than one, then $q=0$. Let $q_{ij}, q_{i\bar j}$ and $q_{\bar i \bar j}$ be the coordinates of $q$. 

The  $(2 \times 2)$-minor of $\m H_{F'_A}$ at $x$ defined by the rows corresponding to $x_{12}$ and $x_{13}$ and the columns corresponding to $x_{2\bar 1}$ and $x_{3\bar 1}$ is given by 
$$
\begin{pmatrix}
0 & 6x_{\bar 2 \bar 3} \\
6x_{\bar 2 \bar 3} & 0
\end{pmatrix}.
$$
It follows that $q_{\bar 2 \bar 3}=0$. 
%
%
By choosing suitable $(2\times 2)$-minors, it follows easily that each coordinate of $q$ is zero.  Thus, the claim follows. 
\end{proof}

\begin{proof}[Proof of Theorem \ref{main}]
Suppose not. Then there exists a smooth projective threefold $Y$ such that $X$ can be obtained as the blow-up $g\colon X\to Y$ at a smooth centre.  Let $E$ be the exceptional divisor of $g$. Let $k$ be the number of singular points of $Z$ and let  $E_1,\dots,E_k$ be the exceptional divisors on $X$ corresponding to the singular points of $Z$. 

We want to prove that $E=E_i$ for some $i=1,\dots,k$. Denote by $p$ the class of $E$ in $H^2(X,\C)$. Lemma \ref{rk2} implies that the rank of $\m H_{F_X}$ at $p$ is not greater than two.

Let $\gamma_1,\dots,\gamma_{m}\in H^2(Z,\C)$ be a basis  and let $F_Z$ be the associated cubic form. Then $f^*\gamma_1,\dots,f^*\gamma_m,[E_1],\dots,[E_k]$ is a basis of $H^2(X,\C)$ and if $F_X$ denotes the associated cubic form, we have 
$$
F_X(x_1, \ldots, x_{m}, y_1, \ldots, y_k)= F_Z(x_1, \ldots, x_{m}) + \sum_{i=1}^{k} a_iy_i^3,
$$
where $a_i=E^3_i$ is a non-zero integer, for $i=1,\dots,k$. 

Thus, the Hessian matrix of $F_X$ is  composed by two blocks: one is the Hessian matrix of $F_Z$ and the other one is a diagonal matrix, whose only non-zero entries are $6a_i$ for $i=1, \ldots, k$. We may write  $p=(p^0, p^1)=(p^0_1, \ldots, p^0_{m}, p^1_1, \ldots, p^1_k)$. We  have 
$\rk \m H_{F_Z}(p^0) \le 2$.

We distinguish two cases. If $\rk \m H_{F_Z}(p^0) =2$, then $p^1=(0, \ldots, 0)$ and in particular $E$ is numerically equivalent to  $f^* D$, for some  pseudo-effective Cartier divisor $D$ on $Z$. 
Since $A$ is abelian, it follows that $\rho^* D$ is a nef divisor. Thus $E$ is nef, a contradiction.

If $\rk \m H_{F_Z}(p^0) \le 1$, then  Lemma \ref{abelian} implies that $p^0=0$. Thus,
$$
E \equiv c_s E_s + c_t E_t
$$
for some distinct $s, t\in \{1,\dots,k\}$ and $c_s, c_t$  rational numbers. Since $E$ is effective non-trivial, at least one of the $c_i$ is positive. By symmetry, we may assume $c_s >0$. By the negativity lemma, the divisor $E_s$ is covered by rational curves $C$ such that $E_s\cdot C <0$. Since $E_s$ and $E_t$ are disjoint, it follows that $E\cdot C <0$, which implies that $C$ is contained in $E$. Thus $E_s$ is contained in  $E$. Since $E$ is prime, it follows that $E=E_s$ and $c_t=0$. 

Finally, note that $g$ contracts $E=E_s$ to a point, as otherwise there exists a small contraction $\eta\colon Y\to Z$ and in particular $Z$ is not $\mathbb Q$-factorial, a contradiction. 
Thus, $g\colon X\to Y$ is the contraction of $E_s$ to the corresponding singular point on $Z$, which is again a contradiction. 
The claim follows.\end{proof}
 
\begin{remark} As K. Oguiso kindly pointed out to us, the same proof shows that if $f\colon X\to Z$ is as in Theorem \ref{main} and $g$ is an automorphism on $X$ then the set of exceptional divisors of $f$ is invariant with respect to $g$. Thus, there exists a positive integer $m$ such that the power $g^m$  descends to an automorphism on $Z$.
 \end{remark}
\bibliographystyle{amsalpha}
\bibliography{Library}
\end{document}